\newtheorem {theorem}{Theorem}[section]
\newtheorem {proposition}{Proposition}[section]
\newtheorem{lemma}{Lemma}[section]
\numberwithin{equation}{section}
\def\R{{\mathbb R}}
\begin{document}

\title[]{Gaussian fluctuations of spatial averages of a system of stochastic heat equations}

\author{David Nualart, Bhargobjyoti Saikia}
 
\thanks{David Nualart was supported by the NSF grant DMS-2054735.}

\address{David Nualart: Department of Mathematics, University of Kansas, 1450 Jayhawk Blvd., Lawrence, KS 66045. USA.}
\email{nualart@ku.edu}   

\address{Bhargobjyoti Saikia: Department of Mathematics, University of Kansas, 1450 Jayhawk Blvd., Lawrence, KS 66045. USA.}
\email{bhargobjs@gmail.com}

\keywords{Stochastic heat equation, Malliavin calculus, central limit theorem}
\date{\today}
\subjclass[2020]{60H15, 60F05, 60H07}

\date{}
\maketitle
 \begin{abstract} 
 We consider a system of $d$ non-linear stochastic heat equations    driven by an $m$-dimensional space-time white noise on $\R_+\times \R$. In this paper we study the asymptotic behavior of spatial averages over large intervals $[-R,R]$.  We establish a rate of convergence to a multivariate normal distribution in the Wasserstein distance and a functional central limit theorem.
  \end{abstract}
  \section{Introduction}\label{section settings}
In this paper we are interested in the following system of stochastic heat equations 

\begin{align}\label{equation}
\frac{\partial u_i}{\partial t}(t,x)= \frac {\partial ^2 u_i} {\partial x^2}(t,x)+\sum_{j=1}^m \sigma_{ij}(u(t,x))\dot{W_j}(t,x) ,
\end{align}
where $u=(u_1,\dots,u_d)$  is a $d$-dimensional random vector depending on the space and time parameters $(t,x) \in \mathbb{R}_+\times \mathbb{R}$.
We assume a constant  initial condition $u_i(0,x)=1$, $1\le i\le d$. For all $1\le i \le d$ and $1\le j\le m$,  $\sigma_{ij}:\mathbb{R}^d\rightarrow \mathbb{R}$ are Lipschitz functions. The noise 
$\dot{W}:=(\dot{W}_1,\dots, \dot{W}_m)$ is a vector of $m$ independent  Gaussian space-time  white noises on $\R_+\times\R$. Note that $d\leq m$. 

It is well-known (see, for instance, \cite{W}) that  equation \eqref{equation} has a unique mild solution $u$ such that for each $i=1,\dots, d$, it satisfies the equation
\begin{equation}
\label{solution}u_i(t,x)=1+\sum_{j=1}^m\int_0^t \int_{\mathbb{R}}p_{t-s}(x-y)\sigma_{ij}(u(s,y))W_j(ds,dy),
\end{equation}
where in the right-hand side, the stochastic integral is interpreted  in the It\^o-Walsh sense, and 
$p_t(x)=(2\pi t)^{-\frac12} e^{-x^2/2t}$, $(t,x) \in \mathbb{R}_+\times \mathbb{R}$, is the heat kernel. This type of systems of stochastic heat equations have been extensively studied and, for instance, we refer to \cite{DKN} for a detailed analysis of hitting probabilities.

Fix a time $t>0$. The goal of this paper is to study the asymptotic behaviour, as  $R$ tends to infinity, of the the spatial averages $F^R(t):=(F^R_1(t),\dots,F_d^R(t))$, 
where,  for $i=1,\dots, d$,
\begin{align}
\label{avg}F_i^{R}(t):= \frac{1}{\sqrt{R}}\left(\int_{-R}^R u_i(t,x)dx-2R\right).
\end{align}
In the case $d=m=1$ functional and quantitative central limit theorems have been established in \cite{CLSHE} using the techniques of Malliavin calculus combined with  Stein's method for normal approximations.   Our work is an extension of that paper to a system of equations.   In this case, the limit in law of  the family of $d$-dimensional random  processes $\{F^R(t), t\in [0,T]\}$ is a $d$-dimensional Gaussian martingale. For the rate of convergence, instead of the total variation distance considered in \cite{CLSHE}, we will deal with the Wasserstein distance, because it is more appropriate in the multidimensional case,  and we will establish a rate of convergence of the order $R^{-1/2}$, assuming a suitable non-degeneracy condition.

The paper is organized as follows. Section 2 contains the main results of this paper. In Section 3 we introduce some preliminaries and in Section 4  we present some important results, which we use later in the proof of the main theorems. Finally, the proof of the main results is given in Section 5.

\section{Main results}
The spatial integral $\int_{-R}^R u_i(t,x)dx$ behaves like a sum of i.i.d random variables and it will be natural to expect a central limit theorem to hold in this situation. Along the paper, we will denote by $C(t)$ the symmetric and nonnegative definite  $d\times d$ matrix given by
\begin{equation}\label{LimitMatrix}
    C_{ij}(t):=2\sum_{k=1}^m\int_0^{t}\eta^{(k)} _{ij}(r)dr, \quad 1\le i,j \le d,
\end{equation}
where 
\begin{equation}
\label{nu}\eta^{(k)}_{ij}(r):=E\left[ \sigma_{ik}(u(r,x)) \sigma_{jk}(u(r,x))\right]. 
 \end{equation}
 Notice that $\eta^{(k)}_{ij}(r)$  does not depend on $x$ because for any fixed $t>0$,
 the process $\{ u(t,x), x\in \mathbb{R}\}$ is stationary   (see, for instance,   \cite[Lemma 7.1]{CKNP}). We will show    that  for each $1\le i,j \le d$
\begin{equation}
\label{convC}\lim_{R\rightarrow \infty} E\left(F_i^{R}(t)F_j^{R}(t)\right)=C_{ij}(t).
\end{equation}

We will make use of the following non-degeneracy condition:

\noindent
{\bf (H1)} The vector space spanned by the family of vectors $\{ (\sigma_{1k} (\overline{1}), \dots, \sigma_{dk} (\overline{1})), k=1,\dots,m\}$
 has dimension $d$, where $\overline {1}=(1,\dots, 1)$.

\smallskip
 Notice that in hypothesis {\bf (H1)} the vector $\overline {1}=(1,\dots, 1)$ is the initial condition, which is supposed to have constant components to ensure the spatial stationarity of the solution. 
 
 \smallskip
Our first result is the following quantitative central limit theorem.

\begin{theorem}\label{MT}
Suppose that $u(t,x)=(u_1(t,x), \dots, u_d(t,x))$ is the mild solution to equation \eqref{equation} and let $F^R(t)$ be given as in \eqref{avg}. 
Assume condition {\bf (H1)}. Let $d_W$ denote the Wasserstein distance defined in  \eqref{wasser} below and let $N(t)$ be a $d$-dimensional random vector with the multivariate normal distribution 
 $ \mathcal{N}(0,C(t))$ with mean zero and covariance matrix $C(t)$  defined  by \eqref{LimitMatrix}.
  Then, there is a constant $c>0$ depending on $t$, such that for any $R\ge 1$,
   \begin{align*}
    d_W(F^R(t),N(t)) &\leq \frac{c}{\sqrt{R}}.
\end{align*}
\end{theorem}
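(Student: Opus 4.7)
The plan is to apply the multivariate Malliavin--Stein bound for the Wasserstein distance: for a centered random vector $F=(F_1,\dots,F_d)\in(\mathbb D^{1,2})^d$ and $N\sim\mathcal N(0,\Sigma)$ with $\Sigma$ strictly positive definite,
\[
d_W(F,N)\le K(\Sigma)\,\sqrt{\sum_{i,j=1}^d \E\!\left[\bigl(\Sigma_{ij}-\langle DF_i,-DL^{-1}F_j\rangle_{\cH}\bigr)^2\right]},
\]
where $\cH=L^2(\R_+\times\R;\R^m)$ is the Cameron--Martin space of $W$ and $L$ is the Ornstein--Uhlenbeck generator. Since $\E[u_i(t,x)]=1$ forces $\E[F_i^R(t)]=0$, this bound is applicable with $\Sigma=C(t)$; hypothesis \textbf{(H1)} enters only here, to ensure that $C(t)$ is invertible so that $K(C(t))<\infty$. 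The task therefore reduces to proving
\[
\E\!\left[\bigl(C_{ij}(t)-\langle DF_i^R(t),-DL^{-1}F_j^R(t)\rangle_{\cH}\bigr)^2\right]=O(R^{-1}),\qquad 1\le i,j\le d.
\]

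The first step is to rewrite the spatial average using \eqref{avg} and \eqref{solution} as a Walsh integral
\[
F_i^R(t)=\sum_{j=1}^m\int_0^t\!\!\int_{\R}\Psi_{ij}^R(s,y)\,W_j(ds,dy),\qquad \Psi_{ij}^R(s,y):=\frac{G_R(t-s,y)}{\sqrt R}\,\sigma_{ij}(u(s,y)),
\]
with $G_R(\tau,y):=\int_{-R}^R p_\tau(x-y)\,dx$. The Malliavin chain rule combined with the SPDE satisfied by $D^{(k)}_{r,z}u_\ell(s,y)$ then produces
\[
D^{(k)}_{r,z}F_i^R(t)=\Psi_{ik}^R(r,z)\mathbf 1_{\{r\le t\}}+\mathcal R^{R,k}_{i}(r,z),
\]
where $\mathcal R$ is a Walsh integral from $r$ to $t$ whose integrand contains a derivative of $\sigma$ and a factor $D^{(k)}_{r,z}u$. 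Using, e.g., the Mehler representation of $-DL^{-1}$, the leading contribution to $\langle DF_i^R,-DL^{-1}F_j^R\rangle_{\cH}$ is
\[
A_{ij}^R(t):=\sum_{k=1}^m\int_0^t\!\!\int_{\R}\Psi_{ik}^R(s,y)\Psi_{jk}^R(s,y)\,ds\,dy.
\]
A direct computation using Fubini, the spatial stationarity of $u(s,\cdot)$, and the identity $\int_\R G_R(\tau,y)^2\,dy=2R-O(1)$, yields $\E[A_{ij}^R(t)]=C_{ij}(t)+O(R^{-1})$, which also verifies the convergence \eqref{convC}.

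It remains to bound both $\Var(A_{ij}^R(t))$ and the $L^2$-norm of the remainder contribution by $O(R^{-1})$. The crucial input, expected to be proved in Section~4, is the $L^p(\Omega)$ estimate
\[
\bigl\|D^{(k)}_{r,z}u_\ell(s,y)\bigr\|_{L^p(\Omega)}\le c_p\,p_{s-r}(y-z),\qquad 0\le r\le s,
\]
obtained by iterating the SPDE for $Du$ together with Burkholder--Davis--Gundy and the Lipschitz property of $\sigma$. With this bound, the variance of $A_{ij}^R(t)$ reduces to integrating products $G_R(\tau,y)^2\,p_{s-r}(y-z)^2\,G_R(\tau',y')^2$ over the appropriate space-time region; the $1/R^2$ prefactor beats the $O(R)$ mass carried by $G_R^2$ exactly once, to give $R^{-1}$. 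A similar treatment of the remainder $\mathcal R$, after one further expansion of $Du$, produces the same order. I expect this last step to be the main obstacle: one must carefully check that every term produced by the chain rule truly contributes at order $R^{-1}$ rather than the weaker $R^{-1/2}$, and the bookkeeping across the $d$ equations and $m$ noises is more delicate than in the scalar case \cite{CLSHE}, even though the structure of the estimates is parallel. Inserting the $O(R^{-1})$ $L^2$ bound into the Malliavin--Stein inequality and taking the square root produces the claimed $c/\sqrt R$ decay.
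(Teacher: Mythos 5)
Your proposal is essentially correct in structure and reaches the same core estimate as the paper, but it follows a genuinely different route in two respects. First, you apply the Malliavin--Stein bound once, directly against $N(t)\sim\mathcal N(0,C(t))$, absorbing the bias $C_{ij}(t)-E[\langle\cdot,\cdot\rangle_{\mathcal H}]=O(R^{-1})$ into the $L^2$ quantity; the paper instead splits via the triangle inequality into $d_W(F^R(t),N^R(t))+d_W(N^R(t),N(t))$, where $N^R(t)$ has the \emph{exact} covariance $C^R(t)=E[F^R(t)F^R(t)^T]$, so that the Stein step only sees a variance, and the bias is handled separately by a Gaussian--Gaussian Wasserstein bound $d_W(N^R,N)\le Q\,\|C^R(t)-C(t)\|_{\rm HS}\le c/R$. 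Both organizations work; the paper's version requires the extra (but easy, via {\bf (H1)} and continuity of $r\mapsto\varphi_\xi(r)$ at $r=0$) verification that $\|(C^R(t))^{-1}\|_{\rm op}$ is bounded uniformly in $R\ge1$, while yours only needs invertibility of the single matrix $C(t)$. Second, and this is the one step you should repair: you invoke the bound with $\langle DF_i,-DL^{-1}F_j\rangle_{\mathcal H}$ and assert via Mehler that its ``leading contribution'' is $A^R_{ij}(t)$. Note that $-DL^{-1}F_j^R(t)$ is \emph{not} equal to the adapted integrand $v_j^R(t)$ of the Walsh representation (they only share the same divergence), and extracting the leading term of $-DL^{-1}$ applied to a nonlinear SPDE functional is substantially harder than you indicate --- this is precisely why the paper (following \cite{CLSHE}) uses the version of the Stein--Malliavin inequality valid for any $v_j$ with $\delta(v_j)=F_j$, namely $d_W(F,N)\le \sqrt d\,\|A^{-1}\|_{\rm op}\|A\|_{\rm op}^{1/2}(\sum_{i,j}E|A_{ij}-\langle v_i,DF_j\rangle_{\mathcal H}|^2)^{1/2}$, proved by the duality formula \eqref{duality} alone. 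If you substitute $v_j^R(t)$ for $-DL^{-1}F_j^R(t)$, your decomposition of $\langle v_j^R(t),DF_i^R(t)\rangle_{\mathcal H}$ into $A^R_{ij}(t)$ plus a remainder driven by the linear equation \eqref{deriv} for $Du$, your key input $\|D^{(k)}_{r,z}u_\ell(s,y)\|_{L^p(\Omega)}\le c_p\,p_{s-r}(y-z)$, and your variance bookkeeping all match the paper's argument and deliver the claimed $O(R^{-1})$ variance, hence the $c/\sqrt R$ rate.
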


Our second result is a functional central limit theorem.
\begin{theorem}\label{CLT} Let 
$u(t,x)=(u_1(t,x),\dots, u_d(t,x))$ be the mild solution to equation \eqref{equation} and let  $\eta^{(k)}_{ij}(r)$  be as in \eqref{nu}. Then  for each  $T>0$
\begin{align}\label{limit}
\left(\frac{1}{\sqrt{R}}\int_{-R}^{R}u(t,x)dx-2R\right)_{t\in[0,T]}&\rightarrow \left(M(t)\right)_{t\in [0,T]}, 
\end{align}
as R tends to infinity, where $(M(t))_{t\in [0,T]}$ is a $d$-dimensional centered Gaussian martingale which satisfies
 \begin{align*}
    E(M_i(t)M_j(s))=\sum_{k=1}^m\int_0^{t\wedge s}2\eta^{(k)}_{ij}(r)dr,
\end{align*}
for each $s,t \in [0,T]$ and all $1\le i,j \le d$, and the convergence holds in distribution.
\end{theorem}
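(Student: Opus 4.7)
The plan is to prove the functional CLT by the standard two-step approach: (i) convergence of finite-dimensional distributions; (ii) tightness of the laws of $\{F^R(\cdot)\}_{R\ge 1}$ in $C([0,T],\R^d)$. Both steps hinge on the It\^o--Walsh representation obtained by inserting \eqref{solution} into \eqref{avg} and applying a stochastic Fubini theorem,
$$F_i^R(t) = \frac{1}{\sqrt{R}}\sum_{j=1}^m \int_0^t\!\!\int_\R \Psi_R^t(r,y)\,\sigma_{ij}(u(r,y))\,W_j(dr,dy),\qquad \Psi_R^t(r,y) := \int_{-R}^R p_{t-r}(x-y)\,dx.$$

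For step (i), I would fix $0\le t_1<\cdots<t_n\le T$, stack the values into the $nd$-dimensional random vector $(F^R(t_1),\ldots,F^R(t_n))$, and run the multivariate Malliavin--Stein analysis underlying Theorem \ref{MT} in ambient dimension $nd$ to obtain joint Gaussian convergence. The limiting covariance is identified by an It\^o isometry combined with the spatial stationarity of $u(r,\cdot)$: for $s\le t$,
$$\lim_{R\to\infty}\E\bigl[F_i^R(s)F_j^R(t)\bigr] = \sum_{k=1}^m \int_0^s 2\eta^{(k)}_{ij}(r)\,dr,$$
which matches the covariance of the proposed limit $M$. Hypothesis \textbf{(H1)} plays no role here, since only convergence, not a quantitative rate, is sought.

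For step (ii), I would apply Kolmogorov's tightness criterion and seek a moment bound $\E|F^R(t)-F^R(s)|^p\le C|t-s|^{1+\gamma}$ for some $p\ge 2$ and $\gamma>0$, uniformly in $R\ge 1$ and $s,t\in[0,T]$. The increment decomposes as $F^R(t)-F^R(s)=A_1+A_2$, where $A_1$ is the It\^o--Walsh integral over $(s,t]\times\R$ against $\Psi_R^t$ and $A_2$ is the integral over $[0,s]\times\R$ against $\Psi_R^t-\Psi_R^s$. Using the Burkholder--Davis--Gundy inequality together with uniform $L^q$-bounds on $u(r,y)$, the linear growth of $\sigma_{ij}$, and the pointwise bound $0\le\Psi_R^t\le 1$, the term $A_1$ is controlled with the desired power of $|t-s|$. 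For $A_2$, the task reduces to a heat-kernel difference bound of the form
$$\int_\R\bigl[\Psi_R^t(r,y)-\Psi_R^s(r,y)\bigr]^2\,dy \le C\,R\,|t-s|^{\alpha}$$
for a suitable $\alpha>0$. Since $\Psi_R^t-\Psi_R^s$ concentrates its mass near the endpoints $y=\pm R$, establishing such an estimate with a constant $C$ independent of $R$ and a power of $|t-s|$ large enough to feed Kolmogorov's criterion is the main technical obstacle. The finite-dimensional convergence, by contrast, should follow essentially mechanically by repackaging the Malliavin--Stein tools already developed for Theorem \ref{MT}.
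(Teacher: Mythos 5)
Your plan coincides with the paper's proof: the paper likewise establishes tightness through the Kolmogorov-type moment bound $E|F^R(t)-F^R(s)|^p\le K(t-s)^{p/2}$ of Lemma \ref{tight} (whose proof is exactly the increment decomposition and heat-kernel estimate you describe --- the bound you flag as the main obstacle does hold, namely $\int_0^s\int_{\R}\bigl(\Psi^t_R(r,y)-\Psi^s_R(r,y)\bigr)^2\,dy\,dr=O(R(t-s))$, via the semigroup identity $\int_\R p_a(x-y)p_b(x'-y)\,dy=p_{a+b}(x-x')$), and proves finite-dimensional convergence by stacking the times and showing $E[(A^{ij}_{pq}-\langle DF^R_p(t_i),v^R_q(t_j)\rangle_{\mathcal H})^2]\to 0$, deferring both steps to \cite{CLSHE}. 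The one detail to adjust is that without {\bf (H1)} the stacked $nd\times nd$ limiting covariance can be singular (e.g.\ if $t_1=0$), so you cannot invoke Proposition \ref{bound1} verbatim with its $\|A^{-1}\|_{\rm op}$ factor; you need the variant of the multivariate Malliavin--Stein bound for smooth test functions that requires no invertibility, which still suffices for convergence in law.
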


\section{Preliminaries}
Let us first introduce the white noise on $\mathbb{R}_{+}\times \mathbb{R}$.
We denote by $\mathcal{B}_b(\mathbb{R}_{+}\times \mathbb{R})$ the collection of Borel sets $ A\subset \mathbb{R}_+\times \mathbb{R}$ with finite Lebesgue measure, denoted by $|A|$. Consider a centered Gaussian family of random variables $W=\{W_j(A),A\in\mathcal{B}_b, 1\le j \le  m\}$, defined in a complete probability space $(\Omega,\mathcal{F},P)$, with covariance 
\[
E[W_j(A)W_k(B)]=\mathbf{1}_{\{j=k\}}|A\cap B|.
\]
We assume that the $\sigma$-algebra $\mathcal{F}$ is generated by $W$ and the $P$-null sets. For any $t\ge 0$, we denote by $\mathcal{F}_t$ the $\sigma$-algebra generated by the random variables
\[
\{W_j([0,s]\times A): 1\le j\le m, 0\le s\le t, A\in \mathcal{B}_b(\mathbb{R})\}.
\]

 As proved in \cite{W}, for any $m$-dimensional adapted random field $\{X(s,y),(s,y)\in \mathbb{R}_+\times \mathbb{R}\}$ that is jointly measurable and
\begin{align}
\label{l2}\int_0^{\infty}\int_{\mathbb{R}}E[|X(s,y)|^2]dyds<\infty,\end{align}
the following stochastic integral 
\[ 
\sum_{j=1}^m \int_0^{\infty}\int_{\mathbb{R}}X_j(s,y)W_j(ds,dy)
\] 
is well defined.
Next we will introduce the basic elements of Malliavin calculus which  are required to prove our results.

\subsection{Malliavin Calculus}
In this section we will discuss some basic facts about  the Malliavin calculus associated with $W$. We refer the reader to \cite{N} for a detailed account on the Malliavin calculus with respect to a Gaussian process.

Consider the Hilbert space $\mathcal{H}=L^2(\mathbb{R}_+\times \mathbb{R};\R^m)$. The Wiener integral 
\[
W(h)=\sum_{j=1}^m \int_0^{\infty}\int_{\mathbb{R}} h_j(t,x)W^j(dt,dx)
\] 
provides an isometry between the Hilbert space $\mathcal{H}$ and $L^2(\Omega)$. In this sense $\{W(h),h\in \mathcal{H} \}$ is an isonormal Gaussian process.

We denote by $C^{\infty}_{p}(\mathbb{R}^n)$ the space of infinitely differentiable functions with all their partial derivatives having at most polynomial growth at infinity. Let $\mathcal{S}$ be the space of simple  and smooth random variables of the form 
\[ 
F=f(W(h^{(1)}),\dots,W(h^{(n)}))
\]
for $f\in C^{\infty}_{p}(\mathbb{R}^n) $ and $h^{(i)} \in \mathcal{H}, 1\le i\le n$. Then, the Malliavin derivative $DF$ is the $\mathcal{H}$-valued random variable defined by 
\begin{align}D_{s,y}^k F &=\sum_{i=1}^n \frac{\partial f}{\partial x_i}(W(h^{(1)}),\dots,W(h^{(n)}))h_k^{(i)}(s,y),
\end{align}
where $1\le k \le m$ and $(s,y) \in \R_+\times \R$.
The derivative operator $D$ is a closable operator  with values in $L^p(\Omega;\mathcal{H})$ for any $p\ge 1$. For any $p\ge 1,$
let $\mathbb{D}^{1,p}$ be the completion of $\mathcal{S}$ with respect to the norm 
\[
\|F\|_{1,p}=(E|F|^p+E||DF||^p_{\mathcal{H}})^{\frac{1}{p}}.
\]
We denote by $\delta$ the adjoint of the derivative operator given by the duality formula
\begin{align}
\label{duality} E(\delta(u)F)&=E(\langle u,DF\rangle_{\mathcal{H}})
\end{align}
for any $F\in \mathbb{D}^{1,2}$, and any $u\in L^2(\Omega;\mathcal{H})$ in the domain of $\delta$, denoted by  $ {\rm Dom}\ \delta$. The operator $\delta$ is also called Skorohod integral and in the  Brownian motion case it coincides with an extension of the It\^o integral  introduced by Skorohod (see \cite{Sk}). More generally, in the context of the space-time white noise $W$, any $m$-dimensional adapted random field $X$ which is jointly measurable and satisfies \eqref{l2} belongs to the domain of $\delta$ and $\delta(X)$ coincides with the It\^o-Walsh integral
\[ 
\delta(X)=\sum_{j=1}^m \int_0^{\infty}\int_{\mathbb{R}}X_j(s,y)W_j(ds,dy).
\]
As a consequence, the mild equation to equation  \eqref{equation} can be written as
\begin{equation}\label{kernel}
  u_i(t,x)=1+ \delta(\mathbf{1}_{[0,t]}(\bullet)p_{t-\bullet}(x-*)\sigma_i(u(\bullet,*)),
\end{equation}
where $(\bullet, *)$ denotes a variable in $\R_+\times \R$ and $\sigma_i=(\sigma_{i1},\dots,\sigma_{im})$.
It is known that for any $(t,x)\in \mathbb{R}_+\times \mathbb{R}$ and each $i=1, \dots, d$, $u_i(t,x)$   belongs to $\mathbb{D}^{1,p}$ for any $p\ge2$ and the derivative satisfies the following linear equation for $t\ge s$ and $1\le \ell \le m$,
\begin{align}\label{deriv}
    D^\ell_{s,y}u_i(t,x)&= p_{t-s}(x-y)\sigma_{i\ell}(u(s,y))\nonumber\\& \qquad \qquad +\sum_{j=1}^m \sum_{k=1}^d\int_s^t\int_{\mathbb{R}}p_{t-r}(x-z)\Sigma_{ij}^{(k)}(r,z)D_{s,y}^\ell u_k(r,z)W_j(dr,dz),
    \end{align}
where $\Sigma_{ij}^{(k)}(r,z)$ is an adapted process, bounded by the Lipschitz constant of $\sigma_{ij}$. If $\sigma_{ij}$ is continuously differentiable, then $\Sigma_{ij}^{(k)}(r,z)=\frac{\partial\sigma_{ij}(u(r,z))}{\partial x_k}$.  This result is proved in Proposition 2.4.4 of \cite{N} in the case of Dirichlet boundary condition on [0,1] and with $d=m=1$.  The proof can be easily extended to the multidimensional case and with the space variable on $\mathbb{R}$. We also refer to \cite{CHN, NQ}, where this result is used when the coefficient is continuously differentiable.

We use Stein method, which is a probabilistic technique which allows us to measure the distance between a probability distribution and a normal distribution, to prove our first theorem. With this aim, we next define the Wasserstein distance.

\subsection{Wasserstein Distance}
Let $F$ and $G$ denote  two integrable  $d$-dimensional random  vectors  defined on the probability space $(\Omega,\mathcal{F},\mathbb{P})$. Then the Wasserstein distance is defined as 
\begin{align}\label{wasser}
d_{W}(F,N)&=\sup_{h\in {\rm Lip}(1)}|E[h(F)]-E[h(N)]|.
\end{align}
 Here, ${\rm Lip}(K)$ stands for the set of functions $h : \mathbb{R}^d\rightarrow \mathbb{R}$ that are Lipschitz with
constant $K > 0$, that is, satisfying $|h(x)-h(y)| \leq K|x-y|$ for all $x, y \in \mathbb{R}^d$.

\section{Basic results}
The next two results provide  upper bounds for the Wasserstein distance between a    $d$-dimensional random vector
whose components can be expressed as divergences and   a random vector  with a $d$-dimensional Gaussian distribution.

\begin{proposition}\label{bound1}
Let $F=(F_1,\dots, F_d)$, with $F_i:=\delta(v_i)$, where $v_i\in {\rm Dom}(\delta), i=1,\dots,d$. Suppose that $ N$ is a $d$-dimensional Gaussian centered vector with an invertible covariance matrix $A$. Then  
\begin{equation} \label{EQ1}
    d_W(F,N)\leq   \sqrt{d}\|A^{-1}\|_{\rm op} \| A\|_{\rm op}^{\frac12} \sqrt{   \sum_{i,j=1}^d
     E(|A_{ij}-\langle v_i, DF_j\rangle_{\mathcal{H}}|^2)},
    \end{equation}
    where $\| \cdot \|_{\rm op}$ denotes the operator norm of a matrix.
\end{proposition}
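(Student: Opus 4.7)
The plan is to combine multivariate Stein's method for the Gaussian law $\mathcal{N}(0,A)$ with the Malliavin integration-by-parts formula \eqref{duality}, following the approach of Nourdin--Peccati--R\'eveillac. First, for each test function $h\in\mathrm{Lip}(1)$, I would introduce the associated Stein solution $f=f_h\in C^2(\mathbb{R}^d)$ satisfying
\begin{equation*}
\langle A,\operatorname{Hess} f(x)\rangle_{HS}-\langle x,\nabla f(x)\rangle=h(x)-E[h(N)],\qquad x\in\mathbb{R}^d,
\end{equation*}
and invoke the classical Chatterjee--Meckes estimate, which bounds the Hessian of $f$ in operator norm by $\|A^{-1}\|_{\rm op}\|A\|_{\rm op}^{1/2}$; since $\|\cdot\|_{HS}\le\sqrt{d}\,\|\cdot\|_{\rm op}$ on $d\times d$ matrices, this yields
\begin{equation*}
\sup_{x\in\mathbb{R}^d}\|\operatorname{Hess} f(x)\|_{HS}\le\sqrt{d}\,\|A^{-1}\|_{\rm op}\|A\|_{\rm op}^{1/2}.
\end{equation*}

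Next, I would evaluate the Stein equation at $x=F$, take expectations, and exploit the assumption $F_i=\delta(v_i)$ together with \eqref{duality} and the chain rule $D(\partial_i f(F))=\sum_{j=1}^d \partial_{ij}f(F)\,DF_j$ to compute
\begin{equation*}
E[F_i\,\partial_i f(F)]=E\bigl[\langle v_i,D(\partial_i f(F))\rangle_{\mathcal{H}}\bigr]=\sum_{j=1}^d E\bigl[\partial_{ij}f(F)\,\langle v_i,DF_j\rangle_{\mathcal{H}}\bigr].
\end{equation*}
Subtracting this from the $A$-term gives the key identity
\begin{equation*}
E[h(F)]-E[h(N)]=\sum_{i,j=1}^d E\bigl[\partial_{ij}f(F)\bigl(A_{ij}-\langle v_i,DF_j\rangle_{\mathcal{H}}\bigr)\bigr].
\end{equation*}

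A Cauchy--Schwarz inequality in the indices $(i,j)$, interpreting the right-hand side as a Hilbert--Schmidt inner product between $\operatorname{Hess} f(F)$ and the residual matrix, followed by a further Cauchy--Schwarz on the expectation and the Hessian bound from the first step, gives
\begin{equation*}
|E[h(F)]-E[h(N)]|\le\sqrt{d}\,\|A^{-1}\|_{\rm op}\|A\|_{\rm op}^{1/2}\sqrt{\sum_{i,j=1}^d E\bigl|A_{ij}-\langle v_i,DF_j\rangle_{\mathcal{H}}\bigr|^2},
\end{equation*}
and taking the supremum over $h\in\mathrm{Lip}(1)$ completes the proof.

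The main obstacle is the rigorous application of the chain rule to $\partial_i f(F)$: since $h$ is merely Lipschitz, $f_h$ is only $C^2$ with a bounded but generally non-Lipschitz Hessian, so one must verify that $\partial_i f(F)\in\mathbb{D}^{1,2}$ before invoking duality. The standard resolution is a mollification argument---replace $h$ by a smooth Lipschitz approximation $h_\varepsilon$, establish the identity above for $h_\varepsilon$, and let $\varepsilon\to 0$, using that the Hessian bound is stable under regularization and that the Wasserstein distance is characterized by $h\in\mathrm{Lip}(1)$.
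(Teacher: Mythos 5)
Your proposal is correct and takes essentially the same route as the paper: the Stein-equation step with the Chatterjee--Meckes Hessian bound that you derive explicitly is exactly what the paper imports wholesale by citing Theorem 4.41 of Nourdin--Peccati, and the subsequent computation (duality formula, chain rule $D(\partial_i f(F))=\sum_j \partial_{ij}f(F)DF_j$, then Cauchy--Schwarz over the indices) is identical to the paper's. Your closing remark about justifying $\partial_i f(F)\in\mathbb{D}^{1,2}$ via mollification is a reasonable point of care that the paper glosses over, but it does not change the argument.
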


\begin{proof} Since $v_i \in {\rm Dom}(\delta)$, the random variables $F_i$ are square integrable. So, 
using  Theorem 4.4.1 in \cite[page 85]{NAMC}, we have 
 \[
 d_W(F,N) \leq \sup_{f\in \mathcal{F}^d_{W}(A)}|E\left[\langle A,{\rm Hess} f(F)\rangle_{HS}\right]-E\left[\langle F,\nabla f(F)\rangle_{\mathbb{R}^d}\right]|,
 \]
 where  $ ({\rm Hess} f)_{ij}=\frac{\partial^2}{\partial_i \partial_j}f$ and $\mathcal{F}^d_{W}(A)$ denotes the class of twice continuously differentiable functions  $f:\mathbb{R}^d\rightarrow  \mathbb{R}$ such that
    \[
    \sup_{x\in \mathbb{R}^d}   \|{\rm Hess} f(x)\|_{HS}\leq\sqrt{d}\|A^{-1}\|_{op} \| A\|_{op}^{\frac12}.
    \]
Consider the expression
\begin{align*}
  \Phi &:=  E\left[\langle A,{\rm Hess} f(F)\rangle_{HS}\right]-  E\left[\langle F,\nabla f(F)\rangle_{\mathbb{R}^d}\right]\\
  &=\sum_{i,j=1}^dE\left[A_{ij}\partial _i \partial _j f(F)\right]-\sum_{i=1}^d E\left[F_i \partial _i f(F)\right],
\end{align*}
where $\partial_i$ is a short notation for $\frac {\partial}{ \partial x_i}$.
    Using the duality formula in the second term, we can write 
    \begin{align}
    \Phi &= \sum_{i,j=1}^dE\left[A_{ij} \partial _i \partial _j f(F)\right]-\sum_{i=1}^dE\left[\left\langle v_i,D( \partial_i f(F))\right\rangle_{\mathcal{H}}\right]\nonumber\\
    &=\sum_{i,j=1}^dE\left[A_{ij}\partial_i\partial_j f(F)\right]-\sum_{i=1}^dE\left[\langle v_i,\sum_{j=1}^d \partial_j\partial_if(F)DF_j\rangle_{\mathcal{H}}\right]\nonumber
    \\
    &=\sum_{i,j=1}^dE\left[\left(A_{ij}-\langle v_i,DF_j\rangle_{\mathcal{H}}\right)\partial_j\partial_if(F)\right].\nonumber
    \end{align}
    Finally, this clearly implies  using the Cauchy-Schwartz inequality
    \[
|\Phi| \leq  \left(\sum_{i,j=1}^d   E(|\partial_j \partial_i f(F)|^2)    \sum_{i,j=1}^d
     E(|A_{ij}-\langle v_i, DF_j\rangle|_{\mathcal{H}}^2) \right)^{\frac 12},
\]
which completes the proof.
\end{proof}

In the next proposition we will apply  Proposition \ref{bound1} to the random vector $F^R(t)$  defined in \eqref{avg}.   Notice  that from 
\eqref{kernel} we have the representation  $F_i^R(t)=\delta(v^R_i(t))$ for $i=1,\dots, d$, where
\begin{equation} \label{v}
    v^R_{i,k}(t)(s,y)=\bold{1}_{[0,t]}(s)\frac{1}{\sqrt{R}} \sigma_{ik}(u(s,y)) \int_{-R}^R p_{t-s}(x-y) dx, \quad 1\le k \le m.
\end{equation}

\begin{proposition}\label{bound3}
Let $F^R(t)$ be as defined in \eqref{avg}, and  let $v^R(t)=(v^R_1(t),\dots, v_d^R(t))$ be  as defined in \eqref{v} for $i=1,\dots, d$.
Suppose that $N^R(t)$  is a $d$-dimensional centered Gaussian random variable
  with covariance matrix $C^R(t)$ such that $C^R_{ij}(t)=E[F^R_i(t) F^R_j(t)]$ for all $i,j =1,\dots, d$.  Then,
\begin{align}
    d_W(F^R(t),N^R(t))&\leq  \sqrt{d}\|(C^R(t))^{-1}\|_{\rm op} \| C^R(t)\|_{\rm op}^{\frac12}   \sqrt{ \sum_{i,j=1}^d {\rm Var}(\langle v^R_i(t),DF^R_j(t)\rangle_{\mathcal{H}})}.
\end{align}
\end{proposition}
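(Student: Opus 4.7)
\medskip

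\noindent\textbf{Proof proposal.} The plan is to invoke Proposition \ref{bound1} with the choice $v_i = v_i^R(t)$ and $A = C^R(t)$, and then to reinterpret the mean-square deviation appearing on the right-hand side of \eqref{EQ1} as a variance. The first ingredient is already in place: by \eqref{kernel}, we have the representation $F_i^R(t) = \delta(v_i^R(t))$ for $i = 1,\dots,d$, where $v_i^R(t)$ is the $\mathcal{H}$-valued random element whose components are given by \eqref{v}. Applying Proposition \ref{bound1} therefore yields
\[
d_W(F^R(t), N^R(t)) \;\leq\; \sqrt{d}\, \|(C^R(t))^{-1}\|_{\rm op}\, \|C^R(t)\|_{\rm op}^{1/2} \sqrt{\sum_{i,j=1}^d E\!\left[\bigl|C_{ij}^R(t) - \langle v_i^R(t), DF_j^R(t)\rangle_{\mathcal{H}}\bigr|^2\right]}.
\]

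To replace the inner mean-square by a variance, it suffices to show that $C_{ij}^R(t) = E[\langle v_i^R(t), DF_j^R(t)\rangle_{\mathcal{H}}]$ for all $i, j = 1, \dots, d$. This is an immediate consequence of the duality formula \eqref{duality}: since $F_i^R(t) = \delta(v_i^R(t))$, and since $F_j^R(t) \in \mathbb{D}^{1,2}$ (this follows from the $\mathbb{D}^{1,p}$ regularity of the mild solution stated after \eqref{deriv}, together with the fact that $F_j^R(t)$ is obtained as a spatial integral over the compact interval $[-R,R]$), we compute
\[
C_{ij}^R(t) \;=\; E\!\left[F_i^R(t)\, F_j^R(t)\right] \;=\; E\!\left[\delta(v_i^R(t))\, F_j^R(t)\right] \;=\; E\!\left[\langle v_i^R(t), DF_j^R(t)\rangle_{\mathcal{H}}\right].
\]
Substituting this identity into the square root then turns each term into $\operatorname{Var}(\langle v_i^R(t), DF_j^R(t)\rangle_{\mathcal{H}})$, which yields the claimed bound.

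The only technical point worth flagging is the implicit assumption that $C^R(t)$ is invertible, so that the operator norm $\|(C^R(t))^{-1}\|_{\rm op}$ is finite. This is not an obstacle in the present proposition (which is stated under that implicit hypothesis), but it will have to be verified uniformly in large $R$ when Proposition \ref{bound3} is used to prove Theorem \ref{MT}; this is where the non-degeneracy condition \textbf{(H1)} enters, together with the convergence \eqref{convC} of $C^R(t)$ to the matrix $C(t)$ defined in \eqref{LimitMatrix}. Apart from this observation, the proof is a direct specialization of Proposition \ref{bound1} combined with one application of the duality formula, and no genuinely new estimate is required at this step.
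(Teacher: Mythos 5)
Your proposal is correct and follows essentially the same route as the paper: apply Proposition \ref{bound1} with $A=C^R(t)$ and $v_i=v_i^R(t)$, then use the duality formula \eqref{duality} to identify $C^R_{ij}(t)$ with $E[\langle v_i^R(t), DF_j^R(t)\rangle_{\mathcal{H}}]$, which converts the mean-square deviation in \eqref{EQ1} into a variance. Your added remark on the invertibility of $C^R(t)$ is a sensible observation but does not change the argument.
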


\begin{proof}
By the duality formula \eqref{duality} we can write
\[
 C^R_{ij}(t)=E(F^R_i(t) F^R_j(t)) =E(\delta(v^R_i(t)) F^R_j(t)) =E[\langle v^R_i(t), DF^R_j(t)\rangle_{\mathcal{H}}].
 \]
 As a consequence, from \eqref{EQ1} we have
 \begin{align*}
    d_W(F^R(t),N^R(t))&  \leq  \sqrt{d}\|(C^R(t))^{-1}\|_{\rm op} \| C^R(t)\|_{\rm op}^{\frac12}\\
    & \qquad \times  \sqrt{ \sum_{i,j=1}^d   E\left[ \left| |E[\langle v^R_i(t), DF^R_j(t)\rangle_{\mathcal{H}}]-\langle v^R_i(t), DF^R_j(t)\rangle_{\mathcal{H}}\right| ^2
    \right]}\\
    &=  \sqrt{d}\|(C^R(t))^{-1}\|_{\rm op} \| C^R(t)\|_{\rm op}^{\frac12} \sqrt{\sum_{i,j=1}^d  {\rm Var}(\langle v^R_i(t), DF^R_j(t)\rangle_{\mathcal{H}})}.
\end{align*}
This completes the proof of the proposition.
\end{proof}

Next we compute the entries of the asymptotic covariance matrix of  the process $\{F^R(t), t\in [0,T]\}$.
\begin{proposition}\label{asympcov}
Let $G^R_i(t)=\int_{-R}^Ru_i(t,x)dx-2R$ for any $i=1,\dots, d$. Then
\[
\lim_{R\rightarrow\infty}\frac{1}{R}E\left[G_i^R(t) G^R_j(s)\right]=2\sum_{k=1}^m\int_0^{t\wedge s}\eta^{(k)}_{ij}(r)dr,
\]
where $\eta^{(k)}_{ij}(r)=E\left[ \sigma_{ik}(u(r,x))\sigma_{jk}(u(r,x))\right]$ has been defined in \eqref{nu}.
\end{proposition}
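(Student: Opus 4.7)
The strategy is to expand $G^R_i(t)$ using the mild form of the equation, apply the It\^o--Walsh isometry to $E[G^R_i(t)G^R_j(s)]$, then exploit the semigroup identity for Gaussian kernels to reduce the spatial integral to an explicit one that can be analyzed as $R\to\infty$.

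First I would rewrite $G^R_i(t)=\int_{-R}^R(u_i(t,x)-1)\,dx$ and substitute the stochastic integral representation \eqref{solution}. After a stochastic Fubini argument (justified by the $L^2$ integrability of the kernel and the moment bounds on $\sigma_{ik}(u(s,y))$), one obtains
\[
G^R_i(t)=\sum_{k=1}^m\int_0^t\int_\R \psi^R_{t,r}(y)\,\sigma_{ik}(u(r,y))\,W_k(dr,dy),
\qquad
\psi^R_{t,r}(y):=\int_{-R}^R p_{t-r}(x-y)\,dx.
\]
Applying the It\^o--Walsh isometry and using that the law of $u(r,y)$ is $y$-stationary so that $E[\sigma_{ik}(u(r,y))\sigma_{jk}(u(r,y))]=\eta^{(k)}_{ij}(r)$ is constant in $y$, I get
\[
E[G^R_i(t)G^R_j(s)]=\sum_{k=1}^m\int_0^{t\wedge s}\eta^{(k)}_{ij}(r)\,\Psi^R(t,s,r)\,dr,
\qquad
\Psi^R(t,s,r):=\int_\R \psi^R_{t,r}(y)\psi^R_{s,r}(y)\,dy.
\]

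The key computation is for $\Psi^R$. Using the semigroup identity $\int_\R p_{t-r}(x-y)p_{s-r}(x'-y)\,dy=p_{t+s-2r}(x-x')$ and Fubini,
\[
\Psi^R(t,s,r)=\int_{-R}^R\!\int_{-R}^R p_{t+s-2r}(x-x')\,dx\,dx'=\int_{-2R}^{2R}(2R-|z|)\,p_{t+s-2r}(z)\,dz,
\]
where the last step uses the change of variables $z=x-x'$. Dividing by $R$,
\[
\frac{1}{R}\Psi^R(t,s,r)=2\int_{-2R}^{2R}p_{t+s-2r}(z)\,dz-\frac{1}{R}\int_{-2R}^{2R}|z|\,p_{t+s-2r}(z)\,dz\xrightarrow[R\to\infty]{}2,
\]
for each $r\in(0,t\wedge s)$, since $p_{t+s-2r}$ is a probability density with finite first moment.

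To conclude, I would interchange the limit with the $r$-integral via dominated convergence: the uniform bound $R^{-1}\Psi^R(t,s,r)\le 2$ (coming from $(2R-|z|)\le 2R$) together with integrability of $\eta^{(k)}_{ij}$ on $[0,t\wedge s]$ (which follows from the Lipschitz property of $\sigma_{ik}$ and the fact that $\sup_{r\le T,\,y\in\R}E[u_i(r,y)^2]<\infty$) supplies the dominating function $2\eta^{(k)}_{ij}(r)$. Passing to the limit yields the claimed identity. The main technical nuisance, rather than an obstacle, is justifying the stochastic Fubini step and uniform moment bounds needed for the dominated convergence; both are standard consequences of the well-posedness of \eqref{equation} in $L^2$.
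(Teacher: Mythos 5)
Your proposal is correct and follows essentially the same route as the paper: the paper first computes the pointwise covariance $E[u_i(t,x)u_j(s,x')]$ via the It\^o--Walsh isometry and the semigroup identity, then integrates over $[-R,R]^2$ and changes variables to $z=x-x'$, arriving at exactly your quantity $\int(2R-|z|)p_{t+s-2r}(z)\,dz$ before passing to the limit. Your version merely applies stochastic Fubini before the isometry rather than after, and spells out the dominated convergence step slightly more explicitly; there is no substantive difference.
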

\begin{proof}
The proof is  similar to the one done in Proposition 3.1 of \cite{CLSHE}. For the sake of completion we will give some details below.
We can write
\begin{align*}
    E[u_i(t,x)u_j(s,x')]&=1+\sum_{k=1}^m\int_0^{t \wedge s}\int_{\mathbb{R}}p_{t-r}(x-y)p_{s-r}(x'-y)E[\sigma_{ik}(u(r,x))\sigma_{jk}(u(r,x))]dydr\\
    &=1+\sum_{k=1}^m\int_0^{t\wedge s}\int_{\mathbb{R}}\eta^{(k)}_{ij}(r)p_{t-r}(x-y)p_{s-r}(x'-y)dydr\\&=1+\sum_{k=1}^m\int_0^{t\wedge s}\eta^{(k)}_{ij}(r)p_{t+s-2r}(x-x')dr.
\end{align*}
Therefore,
\begin{align*}
    {\rm Cov}\left(G^R_i(t),  G^R_j(t)\right)&=\sum_{k=1}^m\int_{-R}^R\int_{-R}^R\int_0^{t\wedge s}\eta^{(k)}_{ij}(r)p_{t+s-2r}(x-x')dr\,dx\,dx'\\&=2\sum_{k=1}^m\int_0^t\eta^{(k)}_{ij}(r)\int_0^{2R}p_{t+s-2r}(z)(2R-z)dzdr.
\end{align*}
As a consequence,
\begin{align*}
    \lim_{R\rightarrow \infty}\frac{1}{R} {\rm Cov}\left(G^R_i(t), G^R_j(s)\right) &= \lim_{R\rightarrow \infty} 2\sum_{k=1}^m\int_0^{t\wedge s}\eta^{(k)}_{ij}(r)\int_0^{2R}p_{t+s-2r}(z)(2-\frac{z}{R})dzdr\\&=2\sum_{k=1}^m\int_0^{t \wedge s}\eta^{(k)}_{ij}(r)dr,
\end{align*}
which completes the proof of the proposition.
\end{proof}

\section{Proof of the main results}
In this section we will show Theorems \ref {MT} and  \ref{CLT}.
\subsection{Proof of Theorem \ref{MT}}
\begin{proof}
Let us  recall that  we used $C^R(t)$ to denote the covariance  matrix of $F^R(t)$ i.e. $C^R_{ij}(t)=E(F^R_i(t) F^R_j(t))$.
In the proof of Proposition \ref{asympcov} we obtained that  
\[
  C^R_{ij}(t)=2\sum_{k=1}^m\int_0^t\eta^{(k)}_{ij}(r)\int_0^{2R}p_{2t-2r}(z)(2-\frac{z}{R})dzdr.
\]
Let $N^R(t)$ and $N(t)$ denote two $d$-dimensional  Gaussian random vectors  with covariance matrices $C^R(t)$ and $C(t)$, where $C(t)$ has been defined in \eqref{LimitMatrix}. Applying  the triangle inequality for the Wasserstein distance and can write
  \begin{align*}
    d_W(F^R(t),N)\le d_W(F^R(t),N^R(t))+d_W(N^R(t),N(t)),
\end{align*}
The proof of Theorem \ref{MT} will be done in two steps.

\medskip
\noindent
{\it Step 1.}  We claim that for any $R\ge 1$,
\begin{equation} \label{EQ6}
d_W(F^R(t),N_R(t)) \le \frac{ c} {\sqrt{R}},
\end{equation}
where $c$ is a constant depending on $t$.  To show \eqref{EQ6} we make use of the following bound proved in Proposition \ref{bound3}: 
\begin{equation} \label{EQ5}
d_W(F^R(t),N_R(t))\le \sqrt{d}\|(C^R(t))^{-1}\|_{\rm op} \| C^R(t)\|_{\rm op}^{\frac12}   \sqrt{ \sum_{i,j=1}^d {\rm Var}(\langle v^R_i(t),DF^R_j(t)\rangle_{\mathcal{H}})}.
\end{equation}
In order to  compute ${\rm Var}(\langle v^R_i(t),DF^R_j(t)\rangle_{\mathcal{H}})$, applying  Fubini's theorem we can write
\begin{align*}
    F^R_i(t)&=\frac{1}{\sqrt{R}}\left(\int_{-R}^Ru_i(t,x)dx-2R  \right)\\&=\sum_{k=1}^m\frac{1}{\sqrt{R}}\left(\int_{-R}^R\int_0^t\int_{\mathbb{R}}p_{t-s}(x-y)\sigma_{ik}(u(s,y))W_k(ds,dy)dx \right)\\&=\sum_{k=1}^m\int_0^t\int_{\mathbb{R}}\left(\frac{1}{\sqrt{R}}\int_{-R}^R p_{t-s}(x-y)\sigma_{ik}(u(s,y))dx\right)W_k(ds,dy).
\end{align*}
We recall that for any fixed $t\geq 0,F^R_i(t)=\delta(v^R_i(t))$, where   $v^R_i(t)$ is defined in \eqref{v}.
Moreover,
\begin{align*}
    D^\ell_{s,y}F^R_i(t)=\textbf{1}_{[0,t]}(s)\frac{1}{\sqrt{R}}\int_{-R}^{R}D^\ell_{s,y}u_i(t,x)dx.
\end{align*}
From \eqref{deriv}, we can write for $\ell =1,\dots, m$,
\begin{align*}
    D^\ell_{s,y}u^i(t,x)&= p_{t-s}(x-y)\sigma_{i\ell}(u(s,y))\nonumber\\& \qquad \qquad +\sum_{j=1}^m \sum_{k=1}^d\int_s^t\int_{\mathbb{R}}p_{t-r}(x-z)\Sigma^{(k)}_{ij}(r,z)D^\ell_{s,y}u_k (r,z)W_j(dr,dz),
    \end{align*}
where $\Sigma_{ij}^{(k)}(r,z)$ is the adapted process defined there.
Therefore,
\begin{align*}
&\langle DF^R_i(t),v^R_j(t)\rangle_{\mathcal{H}} \\
& \qquad =\sum_{\ell=1}^m\frac{1}{R}\int_0^t\int_{\mathbb{R}}\left(\int_{-R}^Rp_{t-s}(x-y)dx\right)^2\sigma_{i\ell}(u(s,y)\sigma_{j\ell}(u(s,y)) dyds
\\&\qquad \qquad +\frac{1}{R }\sum_{\ell=1}^m\sum_{q=1}^m\sum_{k=1}^d\int_0^t\int_{\mathbb{R}}\int_{-R}^R\int_{-R}^Rp_{t-s}(x-y)\sigma_{j\ell}(u(s,y))
\\&\qquad\qquad  \qquad \times \left(\int_s^t\int_{\mathbb{R}}p_{t-r}(x'-z)\Sigma_{iq}^{(k)}(r,z)D^\ell_{s,y} u_k(u(r,z))W_q (dr,dz)\right)dxdx'dyds\bigg)\\
&\qquad  =: \Phi_1+ \Phi_2.
\end{align*}
Now using  arguments analogous to  those in the  proof of Theorem 1.1 in \cite{CLSHE}, we deduce the bound 
\begin{equation}\label{EQ2}
{\rm Var} \left(\langle DF^R_i(t),v^R_j(t)\rangle_{\mathcal{H}} \right)
\leq c R^{-1},
\end{equation} 
where  $c$ is a constant depending on $t$.  More precisely, the proof of the estimate \eqref{EQ2} is based on the following ingredients. The variance of $\Phi_1$ is estimated using Poincar\'e's inequality  ${\rm Var} (\Phi_1)   \le  E( \| D\Phi_1\|_{\mathcal{H}}^2)$ and for the variance of $\Phi_2$ we use the isometry of the stochastic integral. Finally, we make use of the estimate $\|D_{s,y}u(r,z) \| _p \le Cp_{r-s}(z-y)$, for any  $0<s<r\le t$ and $y,z\in \mathbb{R}$, where the constant $C$ depends on $t$ and  $p$ (see \cite[Lemma A.1]{CLSHE}).

On the other hand, we claim that for a fixed $t>0$, 
\begin{equation}
\sup_{R>0} \| C^R(t) \|_{\rm op} <\infty \label{EQ3}
\end{equation}
and
\begin{equation}
\sup_{R\ge 1} \| (C^R(t))^{-1} \|_{\rm op} <\infty \label{EQ4}.
\end{equation}
Then the estimate \eqref{EQ6} will be a consequence of the bounds \eqref{EQ2}, \eqref{EQ3} and \eqref{EQ4}.

It is easy to show the estimate \eqref{EQ3}  and we omit the details. 
In order to show the claim \eqref{EQ4} it suffices to get a lower bound, uniformly in $R\ge 1$, for the determinant of the matrix $C^R(t)$. We have
\[
\det C^R(t) \ge \left( \inf _{|\xi|=1} \xi^T C^R(t) \xi \right)^d.
\]
We can obtain a lower bound for the quadratic form  $\xi^T C^R(t) \xi $ as follows
\begin{align*}
\xi^T C^R(t) \xi& =2 \sum_{i,j=1}^d \sum_{k=1}^m \int_0^t \eta_{ij}^{(k)}(r) \xi_i\xi_j \int_0^{2R} p_{2t-2r}(z) (2- \frac zR) dzdr\\
&= 2 \sum_{k=1}^m \int_0^t E \left( \left| \sum_{i=1}^d \xi_i \sigma_{ik}(u(r,0)) \right|^2\right)  \int_0^{2R} p_{2t-2r}(z) (2- \frac zR) dzdr.
\end{align*}
We have
\[
\int_0^{2R} p_{2t-2r}(z) (2- \frac zR) dz \ge \int_0^{R} p_{2t-2r}(z)   dz \ge \int_0^{1} p_{2t-2r}(z)dz.
\]  
Set
\[
\varphi_\xi(r)=2\sum_{k=1}^m E \left( \left| \sum_{i=1}^d \xi_i \sigma_{ik}(u(r,0)) \right|^2\right).
\]
Then, we can write
\[
\xi^T C^R(t) \xi \ge \int_0^t  \varphi_\xi(r)\int_0^{1} p_{2t-2r}(z)dz dr \ge \int_0^1 p_t(z) dz \int_0^{t/2} \varphi_\xi(r) dr.
\]
Our assumption {\bf (H1)} implies that for any unit vector $\xi$, 
\[
\varphi_\xi(0) =2\sum_{k=1}^m  \left( \left| \sum_{i=1}^d \xi_i \sigma_{ik}(\overline{1}) \right|^2\right) >0.
\]
This implies that  $\xi^T C^R(t) \xi >0$ and, by continuity,  $\inf _{|\xi|=1} \xi^T C^R(t) \xi>0$, which completes the proof of claim \eqref{EQ4}.

\medskip
\noindent
{\it Step 2.} We claim that for $R\ge 1$, 
 \begin{equation} \label{claim}
 d_{W}(N^R(t),N(t)) \le  c R^{-1},
 \end{equation}
 for some constant $c>0$ depending on $t$.
 The proof this estimate we make use of the following bound on the Wasserstein distance between two multidimensional Gaussian vectors
 (see, for instance, Exercise 4.5.3 in  \cite {NAMC}:
 \[
 d_{W}(N^R(t),N(t)) \le Q(C^R(t), C(t)) \| C^R(t)- C(t) \|_{\rm HS},
 \]
 where
 \[
  Q(C_R(t), C(t))= \sqrt{d} \min\{ \|(C^R(t))^{-1} \|_{\rm op} \|C^R(t)\|^{1/2} _{\rm op},   \|C(t)^{-1} \|_{\rm op} \|C(t)\|^{1/2} _{\rm op} \}.
  \]
 The estimates \eqref{EQ3} and \eqref{EQ4} imply that  $  Q(C^R(t), C(t))$ is uniformly bounded by a constant depending on $t$ for any $R\ge 1$.
 Moreover,
 \begin{align*}
  \| C^R(t)- C(t) \|_{\rm HS} &= \left[ \sum_{i,j=1}^d | C^R_{ij}(t) - C_{ij}(t)|^2 \right] ^{\frac 12} \\
  & =  \left[ \sum_{i,j=1}^d  \left| 2 \sum_{k=1}^m \int_0^t \eta^{(k)}_{ij} (r) \left( 1- \int_0^{2R} p_{2t-2r}(z) (2- \frac zR)dz \right) dr \right|^2 \right]^{\frac 12} \\
  & \le c \int_0^t \left | 1- \int_0^{2R} p_{2t-2r}(z) (2- \frac zR)dz \right| dr \\
&\le c \int_0^t \left | 1- 2\int_0^{2R} p_{2t-2r}(z) dz \right| dr +\frac cR   \int_0^t  \int_0^{2R} p_{2t-2r}(z)  |z|dz dr.
  \end{align*}

If $Z_{r,t}$ denotes a random variable with the distribution $  N(0,2t-2r)$, we can write
\[
  \| C^R(t)- C(t) \|_{\rm HS}  \le c\int_0^t P(|Z_{r,t}|>2R)dr  +  \frac cR \int_0^t E(| Z_{r,t}|) dr  \le \frac cR.
\]
This completes the proof of the estimate \eqref{claim}.
\end{proof}

\subsection{Proof of Theorem \ref{CLT}}

\begin{proof}
The proof is similar to that of Theorem 1.2 in  \cite{CLSHE} and we  skip the details.  It suffices to show the weak convergence of finite-dimensional distributions and the tightness property. Tightness follows from the next lemma, whose proof is analogous to that of Proposition 4.1 in   \cite{CLSHE}.

\begin{lemma}\label{tight}
Let $u(t,x)=(u_1(t,x),\dots,u_d(t,x))$ be the solution to equation \eqref{equation}. Then for  each $i, 1\le i\le d$ and any $0\le s<t\le T$ and any $p\ge 1$ there exists a constant $K=K(p,T)$ such that 
\begin{align*}
   E\left( \left |\int_{-R}^R u_i(t,x)dx-\int_{-R}^R u_i(s,x)dx \right |^p\right)&\le KR^{\frac{p}{2}}(t-s)^{\frac{p}{2}}.
\end{align*}
\end{lemma}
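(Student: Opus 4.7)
My plan is to represent the spatial increment $\int_{-R}^R[u_i(t,x)-u_i(s,x)]\,dx$ as a single Walsh stochastic integral, bound its $p$-th moment by the Burkholder--Davis--Gundy inequality, and reduce everything to an analytic $L^2$-estimate on the resulting kernel. Starting from \eqref{solution}, the difference equals
\[
\sum_{j=1}^m\int_0^t\int_\R K_{s,t}(r,y;x)\,\sigma_{ij}(u(r,y))\,W_j(dr,dy),
\]
with $K_{s,t}(r,y;x)=\mathbf{1}_{(s,t]}(r)\,p_{t-r}(x-y)+\mathbf{1}_{[0,s]}(r)\bigl[p_{t-r}(x-y)-p_{s-r}(x-y)\bigr]$. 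A stochastic Fubini brings $\int_{-R}^R dx$ inside, producing the deterministic kernel $\Phi^R_{s,t}(r,y):=\int_{-R}^R K_{s,t}(r,y;x)\,dx$. BDG (or Jensen for $1\le p<2$), together with Minkowski's integral inequality, the Lipschitz growth of $\sigma_{ij}$, and the standard uniform bound $\sup_{r\le T,\,y\in\R} E[|u(r,y)|^p]<\infty$, then reduces the lemma to
\[
I_R(s,t):=\int_0^t\int_\R |\Phi^R_{s,t}(r,y)|^2\,dy\,dr\le C_T\,R(t-s),\qquad R\ge 1.
\]

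I would split $I_R(s,t)$ at $r=s$. For $r\in(s,t]$ one has $\Phi^R_{s,t}(r,y)=P_R(t-r,y):=\int_{-R}^R p_{t-r}(x-y)\,dx\in[0,1]$, and by the semigroup identity $\int_\R P_R(\tau,y)^2\,dy=\int_{-R}^R\int_{-R}^R p_{2\tau}(x-x')\,dx\,dx'\le 2R$, so this piece contributes at most $2R(t-s)$, already of the target size.

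The hard part will be the piece with $r\in[0,s]$, where $\Phi^R_{s,t}(r,y)=P_R(t-r,y)-P_R(s-r,y)$ and a naive triangle bound discards all the cancellation between the two integrated heat kernels. My plan is to pass to the Fourier side: since $P_R(\tau,\cdot)=p_\tau\ast\mathbf{1}_{[-R,R]}$, Plancherel yields
\[
\int_\R \bigl[P_R(t-r,y)-P_R(s-r,y)\bigr]^2\,dy=\frac{1}{2\pi}\int_\R e^{-(s-r)\xi^2}\bigl(1-e^{-(t-s)\xi^2/2}\bigr)^2\,\frac{4\sin^2(R\xi)}{\xi^2}\,d\xi.
\]
Using $(1-e^{-(t-s)\xi^2/2})^2\le (t-s)\xi^2/2$, the $r$-integral contributes $(1-e^{-s\xi^2})/\xi^2\le 1/\xi^2$, and the surviving $\xi$-integral equals $\int_\R 4\sin^2(R\xi)/\xi^2\,d\xi=4\pi R$ by Parseval applied to $\mathbf{1}_{[-R,R]}$. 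This bounds the hard piece by $R(t-s)$, matching the first piece, and closes the argument. A real-variable alternative uses the convexity in $a$ of $A_R(a):=\int_{-R}^R\int_{-R}^R p_a(x-x')\,dx\,dx'$---easily checked from an explicit formula for $A_R''$---together with the Taylor bound $A_R(a)+A_R(b)-2A_R(\tfrac{a+b}{2})\le (b-a)|A_R'(a)|\lesssim (t-s)/\sqrt{a}$, which integrates in $r$ to $O(t-s)$, also dominated by $R(t-s)$ for $R\ge 1$.
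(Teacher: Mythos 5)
Your proposal is correct and follows essentially the route the paper intends: it defers to Proposition 4.1 of \cite{CLSHE}, whose argument is exactly this Walsh-integral representation plus Burkholder--Davis--Gundy, reducing the lemma to the $L^2$-estimate $\int_0^t\int_\R|\Phi^R_{s,t}(r,y)|^2\,dy\,dr\le C_TR(t-s)$ and handling the $r\in[0,s]$ piece via the cancellation $P_R(t-r,\cdot)-P_R(s-r,\cdot)$ (Plancherel or, equivalently, the semigroup identity). Your Fourier computation and the bounds $(1-e^{-(t-s)\xi^2/2})^2\le(t-s)\xi^2/2$, $\int_0^se^{-(s-r)\xi^2}dr\le\xi^{-2}$, $\int_\R 4\sin^2(R\xi)\xi^{-2}d\xi=4\pi R$ check out, so your write-up correctly supplies the details the paper omits.
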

 
 In order to show the convergence in law of the finite-dimensional distributions,   we fix points
 $0\le t_1\le   \cdots \le t_M \le T$. Recall that for each $i_0,1\le i_0\le d$
\[
F_{i_0}^R(t_i)=\frac{1}{\sqrt{R}}\left(\int_{-R}^{R}u_{i_0}(t_i,x)dx-2R\right)
\] 
and set, for $1\le i,j \le M$ and $1\le p,q\le d$,
 \[
 A^{ij}_{pq}:=2 \sum_{k=1}^m \int_0^{t_i\wedge t_j}\eta^{(k)}_{pq}(r)dr, \quad 1\le i,j \le M,
 \]
where  $\eta^{(k)}_{pq}(r)$ is defined in \eqref{nu}.
With arguments similar to those the proof of Theorem 1.2 in \cite{CLSHE}, we can show that for all $1\le i,j \le M$ and $1\le p,q\le d$,
 \[
 \lim_{R\rightarrow \infty}E\left[ \left(A^{ij}_{pq}-\langle DF^R_p(t_i),v^R_{q}(t_j)\rangle_{\mathcal{H}}\right)^2\right]=0.
 \]   
We can then complete the proof  by the methodology used in the proof of  Theorem 1.2 in  \cite{CLSHE} 
\end{proof}

\noindent

\end{document}